\newtheoremstyle{theorem}
  {10pt}		  
  {10pt}  
  {\sl}  
  {\parindent}     
  {\bf}  
  {. }    
  { }    
  {}     
\theoremstyle{theorem}
\newtheorem{theorem}{Theorem}
\newtheorem{corollary}[theorem]{Corollary}
\newtheoremstyle{defi}
  {10pt}		  
  {10pt}  
  {\rm}  
  {\parindent}     
  {\bf}  
  {. }    
  { }    
  {}     
\theoremstyle{defi}
\begin{document}

\title{On the Objects and Morphisms of Category of Soft sets}
{\small
\author{Ratheesh K. P.$^1$ and Sunil Jacob John$^2$\\
$^1$$^,$$^2$Department of Mathematics\\ National Institute of Technology, Calicut, India, Pin-673601\\
$^1$ratheeshmath@gmail.com\\$^2$sunil@nitc.ac.in}}

\date{}

\maketitle
\vspace{-.55in} 
\rule{\linewidth}{0.75pt}\vspace{0.05in}

\begin{abstract}
Soft set theory can deal uncertainties in nature by parametrization process. In this paper, we explore the objects and morphisms of category of soft sets, \textbf{Sset(U)} in detail. Also, gives characterizations of monomorphisms and epimorphisms in \textbf{Sset(U)}.

{\bf AMS Subject Classification:} 03E20, 06D72, 18B99

{\bf Key Words and Phrases:} Category of soft set, initial object, terminal object, separator, co-separator, epimorphism, monomorphism

\rule{\linewidth}{0.75pt}

\end{abstract}

\section{Introduction}

The concept of soft sets introduced by D. Molodtsov \cite{mol}, is widely applied for solving problems in several areas including vagueness and uncertainties. In this paper, we devoted to presenting basic categorical ideas of soft sets over a fixed universal set $U$ and its properties. The class of all soft set over $U$ is denoted by \textbf{Sset(U)}.\\

In \cite{zho}, M. Zhou et al. initiated to develop some categorical properties of soft sets such as equalizers, finite products, adjoint. R. A. Barzooei et al. \cite{bor} explain the concepts of epimorphism and monomorphism using a pair of functions as a morphism. O. Zahiri \cite{zah} discussed the existence of product and coproduct in the category of soft sets. In \cite{Liu}, Z. Liu proved that the category of soft sets is a topos. B. P. Varol et al. \cite{var} investigate the soft neighborhood sets and its topological properties in detail. In \cite{sar}, S. K. Sardar et al. defines soft category as a soft set and prove that soft category is a generalization of the fuzzy category. 

The remaining portion of this article is organized as follows. In Sec. 2, some basic notions in category theory and soft set theory are given. In Sec. 3, objects in the category of soft sets are studied and establish a necessary and sufficient condition for an object to an initial object, terminal object, separator, and co-separator. In Sec 4, morphisms in the category of soft sets and establish some characterizations of epimorphism and monomorphism in detail.   

\section{Preliminaries}

The basic category theory terminologies are taken from J. Adámek, H. Herrlich and G. E. Strecker \cite{str} and we use standard terminology in soft sets from  \cite{maji}, \cite{mol}.
	
A \textbf{category} is a quadruple  $C = (\mathcal{O}, hom, id, \circ)$ consisting of
	\begin{itemize}
		\item[1.] A class $\mathcal{O}$, whose members are called $C$-objects
		\item[2.] For each pair $(A,B)$ of $C$-objects, a set $hom(A,B)$, whose members are called $C$-morphisms from $A$ to $B$
		\item[3.] For each $C$-object $A$, a morphism 
		$id_A: A \rightarrow A$, called the $C$-identity on $A$
		\item[4.] A composition law associating with each $C$-morphism $f: A\rightarrow B $ and each $C$-morphism $g: B \rightarrow C$ an $C$-morphism $g \circ f: A \rightarrow C$, called the composite of $f$ and $g$, subject to the following conditions:\\
		(a) composition is associative\\
		(b) $C$-identities act as identities with respect to composition\\
		(c) the sets $hom(A,B)$ are pairwise disjoint
	\end{itemize}

An object $A$ is said to be an \textbf{initial object} if for each object $B$, $|hom(A,B)|= 1$ and an object $A$ is called a \textbf{terminal object} provided that for each object $B$, $|hom(B,A)|=1$. An object $A$ is called a \textbf{zero object}, if it is both an initial object and a terminal object. An object $S$ is called a \textbf{separator} provided that whenever $f,g: A \rightarrow B$ are distinct morphisms, there exists a morphism $h: S \rightarrow A$ such that $f\circ h \neq g\circ h$. An object $C$ is called a \textbf{co-separator} provided that whenever $f,g: A \rightarrow B$ are distinct morphisms there exists a morphism $k: B \rightarrow C$ such that $k\circ f \neq k\circ g$.
A morphism $f: A \rightarrow B$ is said to be an \textbf{epimorphism}, if for all pairs $h,k : B \rightarrow C$ of morphisms such that $h \circ f = k \circ f$, implies $h = k$. That is, f is right cancellable for composition. A morphism $f: A \rightarrow B$ is said to be a \textbf{monomorphism}, provided that for all pairs $h,k : C \rightarrow A$ of morphisms such that $f \circ h = f \circ k$, implies $h = k$. That is, f is left cancellable for composition. A morphism is called a \textbf{bimorphism} provided that it is both a monomorphism
and an epimorphism.\\
  
Let $\mathcal{F}_A$ and $\mathcal{G}_B$ be two soft sets over $U$. A soft morphism (Soft function \cite{zho}) from $\mathcal{F}_A$ to $\mathcal{G}_B$ is a function $\alpha : A \rightarrow B$ such that $\mathcal{F} (a)\subseteq (\mathcal{G} \circ \alpha)(a)$, for each $a\in A$. The composition of two soft morphisms is again a soft morphism. Each soft morphism $\alpha:\mathcal{F}_A \rightarrow \mathcal{G}_B$ in $hom(\mathcal{F}_A, \mathcal{G}_B)$ is considered as a triple $(\mathcal{F}_A,\alpha,\mathcal{G}_B)$. The composition of soft morphisms is associative and identity function is the identity soft morphism. This establishes that \textbf{Sset(U)} is a category.

\section{Objects in \textbf{Sset(U)} }

 In this section, we characterize initial object, terminal object, separator and co-separator of the category, \textbf{Sset(U)} in detail.  

\begin{theorem}
	An object $\mathcal{F}_A \in \textbf{Sset(U)}$ is an initial object if and only if $A = \emptyset$. 
\end{theorem}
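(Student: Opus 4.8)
The plan is to prove the two implications separately, since the statement is really a claim about the size of the hom-sets emanating from $\mathcal{F}_A$.

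For the ``if'' direction, suppose $A = \emptyset$, and consider the soft set $\mathcal{F}_\emptyset$. For any object $\mathcal{G}_B$ over $U$, a soft morphism $\alpha : \mathcal{F}_\emptyset \rightarrow \mathcal{G}_B$ is, by definition, a function $\alpha : \emptyset \rightarrow B$ with $\mathcal{F}(a) \subseteq (\mathcal{G}\circ\alpha)(a)$ for every $a \in \emptyset$. That condition is vacuous, and there is exactly one function from $\emptyset$ to $B$ (the empty function). Hence $|hom(\mathcal{F}_\emptyset, \mathcal{G}_B)| = 1$ for every object $\mathcal{G}_B$, so $\mathcal{F}_\emptyset$ is an initial object. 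This direction needs no real computation.

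For the ``only if'' direction, I would argue contrapositively: assuming $A \neq \emptyset$, I will produce an object receiving at least two soft morphisms from $\mathcal{F}_A$. The idea is to choose a codomain whose values are as large as possible, so that the subset constraint in the definition of a soft morphism never gets in the way. Concretely, take a two-element parameter set $B = \{b_1, b_2\}$ with $b_1 \neq b_2$ and define $\mathcal{G} : B \rightarrow \mathcal{P}(U)$ by $\mathcal{G}(b_1) = \mathcal{G}(b_2) = U$. Since $\mathcal{F}(a) \subseteq U = \mathcal{G}(b)$ for all $a \in A$ and $b \in B$, every function $A \rightarrow B$ is a soft morphism $\mathcal{F}_A \rightarrow \mathcal{G}_B$. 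Choosing any $a_0 \in A$ (possible as $A \neq \emptyset$), the two constant functions $\alpha_1 \equiv b_1$ and $\alpha_2 \equiv b_2$ are distinct soft morphisms, so $|hom(\mathcal{F}_A, \mathcal{G}_B)| \geq 2 \neq 1$ and $\mathcal{F}_A$ is not initial.

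The only delicate point is this reverse direction, and the crux is the choice of $\mathcal{G}_B$: one must pick the codomain so that the defining inequality $\mathcal{F}(a) \subseteq (\mathcal{G}\circ\alpha)(a)$ never obstructs a function from being a morphism, which is exactly why setting $\mathcal{G} \equiv U$ is the right move. Once that is in place, counting soft morphisms $\mathcal{F}_A \rightarrow \mathcal{G}_B$ reduces to counting ordinary functions $A \rightarrow B$, and the conclusion is immediate. (This implicitly uses that parameter sets may be taken to be arbitrary sets, so that a two-element parameter set is available; if only a fixed parameter universe of size one were allowed the category would be degenerate and the statement vacuous.)
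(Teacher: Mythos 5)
Your proof is correct and follows essentially the same route as the paper: the ``if'' direction uses the unique empty function, and the ``only if'' direction exhibits at least two soft morphisms into an absolute soft set (one with $\mathcal{G}\equiv U$). Your version is in fact slightly more careful than the paper's, since you explicitly take a two-element parameter set for the codomain, whereas the paper loosely asserts $|Mor(\mathcal{F}_A,\mathcal{G}_B)|>1$ for \emph{every} absolute soft set, which fails when $|B|=1$.
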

\begin{proof}
	Let $\mathcal{F}_A$ be an initial object in the category of soft sets, $\textbf{Sset(U)}$. Then $|Mor(\mathcal{F}_A, \mathcal{G}_B)| = 1$, for every soft set $\mathcal{G}_B$ in $\textbf{Sset(U)}$.
	If $A \neq \emptyset$. Then $|Mor(\mathcal{F}_A, \mathcal{G}_B)| > 1$, for every absolute soft soft set $\mathcal{G}_B$. So $A$ is a null set.\\
	
	Conversely assume $A=\emptyset$, there is exactly one function from empty set to any set. So $\mathcal{F}_A$ is an initial object in $\textbf{Sset(U)}$.
\end{proof}

\begin{theorem}
	An object $\mathcal{F}_A \in \textbf{Sset(U)}$ is a terminal object if and only if $\mathcal{F}_A$ is an absolute soft set with $|A| = 1$. 
\end{theorem}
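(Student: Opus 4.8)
The plan is to characterize terminal objects in $\textbf{Sset(U)}$ by analyzing what it means for $|hom(\mathcal{G}_B, \mathcal{F}_A)| = 1$ for every soft set $\mathcal{G}_B$. Recall that a soft morphism $\alpha : \mathcal{G}_B \to \mathcal{F}_A$ is a function $\alpha : B \to A$ with $\mathcal{G}(b) \subseteq \mathcal{F}(\alpha(b))$ for all $b \in B$. So the count $|hom(\mathcal{G}_B, \mathcal{F}_A)|$ is the number of functions $\alpha : B \to A$ satisfying this containment constraint. For this to equal $1$ for \emph{every} $\mathcal{G}_B$, we need in particular uniqueness and existence simultaneously across all domains.

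First I would prove the "if" direction: suppose $\mathcal{F}_A$ is absolute (meaning $\mathcal{F}(a) = U$ for the unique $a \in A$) with $|A| = 1$, say $A = \{a_0\}$. Then for any $\mathcal{G}_B$, the only function $B \to A$ is the constant map $b \mapsto a_0$, and the containment $\mathcal{G}(b) \subseteq \mathcal{F}(a_0) = U$ holds trivially. Hence $|hom(\mathcal{G}_B, \mathcal{F}_A)| = 1$, so $\mathcal{F}_A$ is terminal. (One should also check the edge case $B = \emptyset$: there is exactly one function, the empty function, and the condition is vacuous — still a single morphism.)

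Next, the "only if" direction, which is the substantive part. Assume $\mathcal{F}_A$ is terminal. To force $|A| = 1$: if $|A| \geq 2$, pick any soft set $\mathcal{G}_B$ with $B$ a singleton $\{b_0\}$ and $\mathcal{G}(b_0) = \emptyset$; then every function $B \to A$ is a soft morphism (the containment $\emptyset \subseteq \mathcal{F}(a)$ is automatic), giving at least two morphisms — contradiction. If $A = \emptyset$, then taking $\mathcal{G}_B$ with $B$ nonempty yields no functions $B \to A$ at all, so $|hom| = 0 \neq 1$ — contradiction. Hence $|A| = 1$, say $A = \{a_0\}$. To force $\mathcal{F}(a_0) = U$: suppose $\mathcal{F}(a_0) \subsetneq U$, and pick $\mathcal{G}_B$ with $B = \{b_0\}$ and $\mathcal{G}(b_0) = U$. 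The only candidate function is $b_0 \mapsto a_0$, but $\mathcal{G}(b_0) = U \not\subseteq \mathcal{F}(a_0)$, so there is no soft morphism, i.e.\ $|hom(\mathcal{G}_B, \mathcal{F}_A)| = 0 \neq 1$ — contradiction. Therefore $\mathcal{F}(a_0) = U$, i.e.\ $\mathcal{F}_A$ is absolute with $|A| = 1$.

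The main obstacle — though it is mild — is making sure the counterexample soft sets chosen in the "only if" direction are legitimate objects of $\textbf{Sset(U)}$ and that the two separate failure modes (too many morphisms when $|A| \geq 2$, and zero morphisms when $\mathcal{F}(a_0) \neq U$ or $A = \emptyset$) are each handled by an appropriate choice of test object; the argument is really just a careful case split with well-chosen witnesses. I would organize the proof as: (i) show $A \neq \emptyset$; (ii) show $|A| \le 1$; (iii) show $\mathcal{F}$ is absolute; (iv) converse. No genuine difficulty is expected beyond bookkeeping.
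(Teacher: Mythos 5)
Your proposal is correct and follows essentially the same route as the paper: the converse uses a null soft set as the test object to force $|A|=1$ and an absolute soft set to force $\mathcal{F}(a_0)=U$, while the forward direction observes that the unique constant map is automatically a soft morphism. Your only addition is the explicit treatment of the case $A=\emptyset$ (zero morphisms from a nonempty domain), a small gap the paper's own proof glosses over when it jumps from ``$|A|>1$ is impossible'' to ``$|A|=1$''.
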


\begin{proof}
	Let $\mathcal{F}_A$ be an absolute soft set with $|A| = 1$, then there is exactly one soft morphism (constant function) between any soft set $\mathcal{G}_B$ and $\mathcal{F}_A$.\\
	
	Conversely, if $|A| > 1$. Then $|Mor(\mathcal{G}_B, \mathcal{F}_A)| > 1$ for every null soft set $\mathcal{G}_B$, which is a contradiction. So $|A| = 1$. Also, if $\mathcal{F}_A$ is not an absolute soft set, then there is no soft morphism between absolute soft set $\mathcal{G}_B$ and $\mathcal{F}_A$. So $\mathcal{F}_A$ must be an absolute soft set with $|A| = 1$.
\end{proof}

\begin{theorem}
	The category \textbf{Sset(U)} has no zero object.
\end{theorem}

\begin{theorem}
	An object $\mathcal{H}_C$ in \textbf{Sset(U)} is a separator if and only if $\mathcal{H}$ is a null soft set with $C\neq \emptyset$. 
\end{theorem}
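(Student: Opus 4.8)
The plan is to prove the two implications separately: the ``if'' direction directly, and the converse by contraposition in two cases.

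For sufficiency, assume $\mathcal{H}$ is null and $C\neq\emptyset$, and let $f,g:\mathcal{F}_A\to\mathcal{G}_B$ be distinct soft morphisms. Since $f\neq g$ as functions, fix $a^{*}\in A$ with $f(a^{*})\neq g(a^{*})$ (in particular $A\neq\emptyset$). Let $h:C\to A$ be the constant function at $a^{*}$. Because $\mathcal{H}(c)=\emptyset\subseteq\mathcal{F}(a^{*})=(\mathcal{F}\circ h)(c)$ for every $c\in C$, the map $h$ is a soft morphism $\mathcal{H}_C\to\mathcal{F}_A$; and choosing any $c\in C$ (possible as $C\neq\emptyset$) we get $(f\circ h)(c)=f(a^{*})\neq g(a^{*})=(g\circ h)(c)$, so $f\circ h\neq g\circ h$. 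Hence $\mathcal{H}_C$ is a separator.

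For necessity I will show that if $\mathcal{H}_C$ is \emph{not} a null soft set with $C\neq\emptyset$, then it is not a separator. If $C=\emptyset$, then $\mathcal{H}_C$ is the initial object by Theorem 1, so for every object there is exactly one morphism out of $\mathcal{H}_C$; taking two distinct soft morphisms between the one-parameter absolute soft set $\mathcal{P}_{\{p\}}$ and the two-parameter absolute soft set $\mathcal{Q}_{\{q_1,q_2\}}$ (every function $\{p\}\to\{q_1,q_2\}$ qualifies), their composites with the unique empty morphism $h:\mathcal{H}_C\to\mathcal{P}_{\{p\}}$ coincide, so $\mathcal{H}_C$ does not separate them. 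If instead $\mathcal{H}$ is not null, pick $c_0\in C$ and $u_0\in\mathcal{H}(c_0)$, and consider the one-parameter soft set $\mathcal{F}_{\{a\}}$ with $\mathcal{F}(a)=U\setminus\{u_0\}$ together with the two-parameter absolute soft set $\mathcal{G}_{\{b_1,b_2\}}$: the two functions $\{a\}\to\{b_1,b_2\}$ are distinct soft morphisms $\mathcal{F}_{\{a\}}\to\mathcal{G}_{\{b_1,b_2\}}$, whereas $hom(\mathcal{H}_C,\mathcal{F}_{\{a\}})=\emptyset$, since any such $h$ would force $u_0\in\mathcal{H}(c_0)\subseteq\mathcal{F}(h(c_0))=U\setminus\{u_0\}$, a contradiction. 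In either case $\mathcal{H}_C$ fails to separate a pair of distinct parallel morphisms, so it is not a separator, and the two cases together give the claim.

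The bookkeeping here is routine: checking that the displayed functions satisfy the soft-morphism condition $\mathcal{F}(a)\subseteq(\mathcal{G}\circ\alpha)(a)$, and invoking Theorem 1 for the case $C=\emptyset$, are both immediate from the definitions. The one genuinely load-bearing idea is in the second case of necessity: instead of arranging $f\circ h=g\circ h$ for actual morphisms $h$, I defeat the separator property by choosing the common source $\mathcal{F}_{\{a\}}$ so that there is no soft morphism from $\mathcal{H}_C$ into it at all, which is possible precisely because $\mathcal{H}$ takes a nonempty value somewhere. Getting that construction right is the crux; everything else is a direct consequence of the definitions.
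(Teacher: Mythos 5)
Your proof is correct and follows essentially the same route as the paper: the sufficiency direction via the constant map into the point where the two morphisms differ, and the necessity by showing that $C=\emptyset$ forces equal composites while a nonempty value $\mathcal{H}(c_0)\neq\emptyset$ blocks any morphism from $\mathcal{H}_C$ into a suitably chosen source of distinct parallel morphisms. Your version is merely more explicit than the paper's (you exhibit the witnessing parallel pairs and use $\mathcal{F}(a)=U\setminus\{u_0\}$ where the paper uses a null soft set), but the underlying argument is the same.
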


\begin{proof}
	Suppose $\mathcal{H}_C$ is a null soft set with $C\neq \emptyset$. Let $\mathcal{F}_A$ and $\mathcal{G}_B$ be two object in \textbf{Sset(U)} and $\alpha, \beta : \mathcal{F}_A \rightarrow \mathcal{G}_B$ be two distinct soft morphisms, ie $\alpha(a) \neq \beta(a)$, for some $a\in A$.\\
	Define $\gamma: C\rightarrow A$ by $\gamma(c) = a$, for all $c\in C$. By definition of $\mathcal{H}$, $\gamma$ is a soft morphism from $\mathcal{H}_C$ to $\mathcal{F}_A$ (ie $\emptyset = \mathcal{H}(c) \subseteq (\mathcal{F} \circ \gamma)(c)$, for all $c\in C$). Also $(\alpha \circ \gamma)(c)= \alpha(a)$ and $(\beta \circ \gamma)(c)= \beta(a)$. So $\alpha \circ \gamma \neq \beta \circ \gamma$. Hence $\mathcal{H}_C$ is a separator in \textbf{Sset(U)}.\\
	
	Conversely assume that $\mathcal{H}_C$ is a separator in \textbf{Sset(U)}. If $C= \emptyset$, then $\alpha \circ \gamma = \beta \circ \gamma$. Now assume there exists  $c \in C$ such that $\mathcal{H}(c) \neq \emptyset$, then there is no soft morphism between $\mathcal{H}_C$ and null soft set $\mathcal{F}_A$. Hence $\mathcal{H}_C$ is a null soft set with $C \neq \emptyset$.
\end{proof}

\begin{theorem}
	An object $\mathcal{H}_C$ in \textbf{Sset(U)} is a co-separator if and only if there exist $c_1, c_2 \in C$, $c_1 \neq c_2$ such that  $\mathcal{H}(c_1)=\mathcal{H}(c_2) = U$. 
\end{theorem}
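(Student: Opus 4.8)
The plan is to prove both implications by building explicit soft morphisms into $\mathcal{H}_C$. The guiding observation is that a parameter $c$ with $\mathcal{H}(c) = U$ ``absorbs'' every soft set: the soft‑morphism inequality $\mathcal{G}(b) \subseteq \mathcal{H}(k(b))$ becomes trivially true whenever $k(b)$ is such a parameter. So a co‑separator needs at least two \emph{distinct} such parameters (to separate two distinct morphisms), and conversely two of them suffice.

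First I would settle the (easier) sufficiency. Assume there are $c_1 \neq c_2$ in $C$ with $\mathcal{H}(c_1) = \mathcal{H}(c_2) = U$. Let $\alpha, \beta : \mathcal{F}_A \rightarrow \mathcal{G}_B$ be distinct soft morphisms, so $\alpha(a_0) \neq \beta(a_0)$ for some $a_0 \in A$. Define $k : B \rightarrow C$ by $k(\alpha(a_0)) = c_1$ and $k(b) = c_2$ for every $b \in B \setminus \{\alpha(a_0)\}$. For each $b \in B$ we have $k(b) \in \{c_1, c_2\}$, hence $\mathcal{G}(b) \subseteq U = \mathcal{H}(k(b))$, so $k : \mathcal{G}_B \rightarrow \mathcal{H}_C$ is a soft morphism. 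Since $\beta(a_0) \neq \alpha(a_0)$, we get $(k \circ \beta)(a_0) = c_2 \neq c_1 = (k \circ \alpha)(a_0)$, so $k \circ \alpha \neq k \circ \beta$, and $\mathcal{H}_C$ is a co-separator.

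For necessity I would feed the co-separator a minimal ``test diagram''. Let $A = \{a_0\}$ and $B = \{b_1, b_2\}$ with $b_1 \neq b_2$, and let $\mathcal{F}_A$ and $\mathcal{G}_B$ be the absolute soft sets on these parameter sets, so $\mathcal{F}(a_0) = \mathcal{G}(b_1) = \mathcal{G}(b_2) = U$. The maps $\alpha, \beta : A \rightarrow B$ with $\alpha(a_0) = b_1$ and $\beta(a_0) = b_2$ are soft morphisms $\mathcal{F}_A \rightarrow \mathcal{G}_B$ because $U = \mathcal{F}(a_0) \subseteq U = \mathcal{G}(\alpha(a_0)) = \mathcal{G}(\beta(a_0))$, and they are distinct. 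By the co-separator hypothesis there is a soft morphism $k : \mathcal{G}_B \rightarrow \mathcal{H}_C$ with $k \circ \alpha \neq k \circ \beta$, i.e. $k(b_1) \neq k(b_2)$. Setting $c_1 := k(b_1)$ and $c_2 := k(b_2)$, we have $c_1 \neq c_2$, and the soft-morphism condition on $k$ gives $U = \mathcal{G}(b_i) \subseteq \mathcal{H}(c_i)$ for $i = 1,2$, forcing $\mathcal{H}(c_1) = \mathcal{H}(c_2) = U$, as required.

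The only genuinely delicate point is checking that the map $k$ constructed in the sufficiency part is a bona fide soft morphism for \emph{every} $\mathcal{G}_B$; this is exactly where the hypothesis $\mathcal{H}(c_1) = \mathcal{H}(c_2) = U$ is used, since it makes $\mathcal{G}(b) \subseteq \mathcal{H}(k(b))$ hold without any assumption on $\mathcal{G}$. Everything else is bookkeeping: one should note that the degenerate cases ($B$ empty, or $\alpha(a_0)$ being the only element of $B$) cause no problem, and that the absolute soft sets $\mathcal{F}_A$, $\mathcal{G}_B$ used in the necessity argument are legitimate objects of \textbf{Sset(U)}.
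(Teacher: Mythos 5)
Your proof is correct and follows essentially the same route as the paper: the sufficiency construction (send $\alpha(a_0)$ to $c_1$ and every other element of $B$ to $c_2$) is identical, and your necessity argument rests on the same key idea of testing against absolute soft sets so that the condition $\mathcal{G}(b)\subseteq \mathcal{H}(k(b))$ forces the values of $\mathcal{H}$ to equal $U$. If anything, your explicit minimal test diagram with $|A|=1$ and $|B|=2$ streamlines the paper's necessity step, which first disposes of the cases $C=\emptyset$ and $|C|=1$ and then argues with arbitrary distinct morphisms into an absolute soft set.
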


\begin{proof}
	Suppose $c_1, c_2 \in C$, $c_1 \neq c_2$ with $\mathcal{H}(c_1)=\mathcal{H}(c_2) = U$.  Let $\alpha, \beta : \mathcal{F}_A \rightarrow \mathcal{G}_B$ be two distinct soft morphisms, i.e. $\alpha(a_1) \neq \beta(a_1)$, for some $a_1\in A$. \\
	Define  $\gamma: B\rightarrow C$ by $ \gamma(b)  = \left\{ \begin{array}{ll}
	c_1 & \mbox{if $ b=\alpha(a_1) $};\\
	c_2 & \mbox{$ otherwise $}\end{array} \right.$\\ Then $\gamma: \mathcal{G}_B \rightarrow \mathcal{H}_C$ is a soft morphism, since $\mathcal{G}(b) \subseteq U= \mathcal{H}(c_1)=\mathcal{H}(c_2)$. Also, $(\gamma \circ \alpha)(a_1) = c_1 \neq c_2= (\gamma \circ \beta)(a_1)$. Hence $\mathcal{H}_C$ is a co-separator.\\
	
	Conversely, if $C= \emptyset$, then there is no soft morphism between $\mathcal{G}_B$ and $\mathcal{H}_C$ and if $|C|=1$, then $\gamma \circ \alpha= \gamma \circ \beta$. Therefore $C$ contains at least two distinct points. Since $\mathcal{H}_C$ is a co-separator, then there exists a soft morphism $\gamma : \mathcal{G}_B \rightarrow \mathcal{H}_C$ such that  $\gamma \circ \alpha \neq \gamma \circ \beta$, i.e. $(\gamma \circ \alpha)(a) = c_1 \neq c_2= (\gamma \circ \beta)(a)$, for some $a\in A$.
	Now let $\alpha(a)= b_1$ and $\beta(a)= b_2$ and assume that $\mathcal{G}$ is an absolute soft set, then $\mathcal{H}(c_1) = (\mathcal{H}\circ \gamma)(b_1) = U$ and $\mathcal{H}(c_2) = (\mathcal{H}\circ \gamma)(b_2) = U$.	
\end{proof}

\section{Morphisms in \textbf{Sset(U)} }
In this section, we investigate the notion of epimorphism and mono morphism and characterize them.

\begin{theorem}\label{epi}
	A soft morphism $\alpha: \mathcal{F}_A \rightarrow \mathcal{G}_B $ is an epimorphism  if and only if $\alpha$ is surjective.
\end{theorem}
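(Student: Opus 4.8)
The plan is to prove the two directions separately, with the ``surjective $\Rightarrow$ epi'' direction being the straightforward one and the converse requiring a careful choice of test morphisms that respects the soft-set ordering condition $\mathcal{H}(b)\subseteq(\mathcal{K}\circ\gamma)(b)$.

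For the forward direction, suppose $\alpha:A\to B$ is surjective and let $h,k:\mathcal{G}_B\to\mathcal{H}_C$ be soft morphisms with $h\circ\alpha=k\circ\alpha$. For any $b\in B$, by surjectivity pick $a\in A$ with $\alpha(a)=b$; then $h(b)=h(\alpha(a))=(h\circ\alpha)(a)=(k\circ\alpha)(a)=k(b)$, so $h=k$ as functions, hence as soft morphisms. This shows $\alpha$ is an epimorphism, and it uses nothing about the soft structure.

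For the converse, I would argue by contraposition: assume $\alpha$ is not surjective, so there exists $b_0\in B\setminus\alpha(A)$, and construct two distinct soft morphisms $h,k:\mathcal{G}_B\to\mathcal{H}_C$ that agree on $\alpha(A)$. The natural target is an absolute soft set $\mathcal{H}_C$ on a two-point set $C=\{c_1,c_2\}$ (so $\mathcal{H}(c_1)=\mathcal{H}(c_2)=U$), which guarantees that \emph{every} function $B\to C$ is automatically a soft morphism since $\mathcal{G}(b)\subseteq U$ for all $b$. Define $h(b)=c_1$ for all $b\in B$, and define $k(b)=c_1$ for $b\neq b_0$ with $k(b_0)=c_2$. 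Both are soft morphisms into $\mathcal{H}_C$; they agree on $B\setminus\{b_0\}\supseteq\alpha(A)$, so $h\circ\alpha=k\circ\alpha$, yet $h\neq k$ because $h(b_0)\neq k(b_0)$. Hence $\alpha$ is not an epimorphism.

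The main obstacle — really the only subtlety — is ensuring the constructed test functions $h,k$ are genuinely soft morphisms rather than arbitrary set functions; choosing the codomain to be an absolute soft set sidesteps this entirely, since the containment condition becomes vacuous. One should also note the degenerate case $A=\emptyset$: then $\alpha$ is surjective iff $B=\emptyset$, and if $B\neq\emptyset$ the same two morphisms into $\mathcal{H}_C$ witness the failure of the epi property (they vacuously agree after precomposition with the empty map), so the argument goes through uniformly. The existence of such an absolute soft set $\mathcal{H}_C$ on a two-element index set is unproblematic, so the proof is complete.
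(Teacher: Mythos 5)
Your proposal is correct and uses essentially the same idea as the paper: test the epimorphism against a two-point absolute soft set, where the containment condition $\mathcal{G}(b)\subseteq U$ makes every function $B\to C$ a soft morphism. The only cosmetic difference is that you argue by contraposition and let the two test maps differ at a single point $b_0\notin\alpha(A)$, whereas the paper argues directly with a map that is constant on the image and different off it; the substance is identical.
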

\begin{proof}
	Assume that $\alpha: \mathcal{F}_A \rightarrow \mathcal{G}_B $ is an epimorphism, 
	Let $C= \{0,1\}$  and $\mathcal{H}_C$ be an absolute soft set. Define $\beta, \gamma: B \rightarrow C$ by $\beta(B) =\{0\}$ and $ \gamma(b)  = \left\{ \begin{array}{ll}
	0 & \mbox{if $ b=\alpha(a),  a\in A $};\\
	1 & \mbox{$ otherwise $}\end{array} \right.$\\
	Then $\beta, \gamma : \mathcal{G}_B \rightarrow \mathcal{H}_C$  are soft morphisms with $\beta \circ \alpha = \gamma \circ \alpha $, which implies  $\beta= \gamma$, since $\alpha$ is an epimorphism. From the definition of $\beta$ and $\gamma$, $\beta(B)=\{0\}= \gamma(\alpha(A))$. Hence $\alpha(A)= B$, i.e. $\alpha$ is surjective.\\
	
	Conversely assume that $\alpha$ is surjective. Let $\beta, \gamma : \mathcal{G}_B \rightarrow \mathcal{H}_C$ be soft morphisms with $\beta \circ \alpha = \gamma \circ \alpha$. Let $b\in B$, then there exists $a\in A$ such that $\alpha(a)= b$.\\
	Now, for any $b\in B$
	\begin{eqnarray*}
		\beta(b) & = & \beta(\alpha(a)) \\
		& = & (\beta \circ \alpha)(a) \\
		& = & (\gamma \circ \alpha)(a)  \\
		& = & \gamma(\alpha(a))\\ 
		& = & \gamma(b) 
	\end{eqnarray*}	
	Which implies $\beta = \gamma$. Hence $\alpha: \mathcal{F}_A \rightarrow \mathcal{G}_B $ is an epimorphism.
\end{proof}

\begin{theorem}\label{mono}
	A soft morphism $\alpha: \mathcal{F}_A \rightarrow \mathcal{G}_B $ is a monomorphism  if and only if $\alpha$ is injective.
\end{theorem}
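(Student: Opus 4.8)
The plan is to prove the two implications separately, mirroring the structure of the proof of Theorem~\ref{epi}. The easy direction is that an injective soft morphism is a monomorphism: given soft morphisms $\beta, \gamma : \mathcal{H}_C \rightarrow \mathcal{F}_A$ with $\alpha \circ \beta = \alpha \circ \gamma$, I would evaluate at an arbitrary $c \in C$ to get $\alpha(\beta(c)) = \alpha(\gamma(c))$, and then cancel $\alpha$ on the left using injectivity to conclude $\beta(c) = \gamma(c)$ for all $c$, hence $\beta = \gamma$. This is a routine diagram chase and needs nothing about the soft-set structure beyond the fact that a soft morphism is in particular a function.

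For the converse, I would argue the contrapositive: suppose $\alpha$ is not injective, so there are $a_1 \neq a_2$ in $A$ with $\alpha(a_1) = \alpha(a_2)$. I need to manufacture an object $\mathcal{H}_C$ and two distinct soft morphisms $\beta, \gamma : \mathcal{H}_C \rightarrow \mathcal{F}_A$ that become equal after post-composition with $\alpha$. The natural choice is to take $C$ a one-point set, say $C = \{*\}$, and to define $\beta(*) = a_1$, $\gamma(*) = a_2$; these are distinct as functions. Post-composing, $(\alpha \circ \beta)(*) = \alpha(a_1) = \alpha(a_2) = (\alpha \circ \gamma)(*)$, so $\alpha \circ \beta = \alpha \circ \gamma$, contradicting the monomorphism property — provided $\beta$ and $\gamma$ genuinely are soft morphisms into $\mathcal{F}_A$.

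The main obstacle, and the only place the soft-set structure enters, is ensuring $\beta$ and $\gamma$ satisfy the soft-morphism inequality $\mathcal{H}(*) \subseteq (\mathcal{F} \circ \beta)(*) = \mathcal{F}(a_1)$ and likewise $\mathcal{H}(*) \subseteq \mathcal{F}(a_2)$. This is easily arranged by choosing $\mathcal{H}$ to be the null soft set on $C$, i.e. $\mathcal{H}(*) = \emptyset$, so that the containment holds trivially; this is exactly the trick already used in the proofs of Theorem~4 (the separator characterization) and the second half of Theorem~6, so it is consistent with the paper's style. One subtlety worth a remark: if $A = \emptyset$ then $\alpha$ is vacuously injective and also vacuously a monomorphism, so the statement holds in that degenerate case without needing the construction; the construction above is only invoked when a failure of injectivity actually occurs, which forces $A \neq \emptyset$ and makes the definitions of $\beta, \gamma$ legitimate. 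Assembling these pieces gives the equivalence.
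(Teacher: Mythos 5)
Your proposal is correct and uses essentially the same argument as the paper: the routine cancellation chase for the easy direction, and for the hard direction the same test object (a one-point parameter set carrying the null soft set) with the two point-maps $c \mapsto a_1$, $c \mapsto a_2$, only phrased contrapositively where the paper argues directly. The remark about the degenerate case $A = \emptyset$ is a harmless addition.
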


\begin{proof}
	Suppose $\alpha $ is a monomorphism. Let $a_1, a_2 \in A$ and let $C=\{c\}$ and $\mathcal{H}(c)= \emptyset$. Define $\beta, \gamma : C \rightarrow A$ by $\beta(c)= a_1$ and $\gamma(c)= a_2$. By definition of $\mathcal{H}$, $\beta$ and $ \gamma$ are soft morphisms from $\mathcal{H}_C$ to $\mathcal{F}_A$. For $a_1, a_2 \in A$, 
	\begin{eqnarray*}
		\alpha(a_1) = \alpha(a_2) & \implies & (\alpha \circ \beta)(c) = (\alpha \circ \gamma)(c) \\
		& \implies & \beta(c)= \gamma(c)\hspace{0.8cm} \text{Since $\alpha$ is a monomorphism} \\
		& \implies & a_1= a_2 	
	\end{eqnarray*}
	Which implies $\alpha$ is injective. \\
	
	Conversely assume $\alpha$ is injective. Let $\beta, \gamma : \mathcal{H}_C \rightarrow \mathcal{F}_A$ be soft morphisms with $\alpha \circ \beta = \alpha \circ \gamma$. Now, for any $c\in C$
	\begin{eqnarray*}
		(\alpha \circ \beta)(c) = (\alpha \circ \gamma)(c) & \implies & \alpha ( \beta(c)) = \alpha ( \gamma(c)) \\
		& \implies & \beta(c)= \gamma(c)\hspace{0.8cm} \text{Since $\alpha$ is injective} \\
		& \implies & \beta = \gamma	
	\end{eqnarray*}
	Hence $\alpha$ is a monomorphism.
\end{proof}

\begin{corollary}
	In \textbf{Sset(U)}, the bimorphisms are precisely the bijective soft morphisms.
\end{corollary}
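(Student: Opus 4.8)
The plan is to derive this immediately from the two preceding theorems together with the standard fact that a function between sets is bijective if and only if it is both injective and surjective. Recall that a bimorphism in any category is by definition a morphism that is simultaneously a monomorphism and an epimorphism; so I would simply intersect the two characterizations already established.

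First I would invoke Theorem~\ref{mono}, which tells us that a soft morphism $\alpha:\mathcal{F}_A\to\mathcal{G}_B$ is a monomorphism precisely when $\alpha$ is injective. Then I would invoke Theorem~\ref{epi}, which tells us that $\alpha$ is an epimorphism precisely when $\alpha$ is surjective. Combining the two, $\alpha$ is a bimorphism if and only if $\alpha$ is both injective and surjective, i.e. bijective as a function $A\to B$. Conversely, if $\alpha$ is a bijective soft morphism, it is injective and surjective, hence (again by the two theorems) both a monomorphism and an epimorphism, hence a bimorphism. This closes the equivalence.

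There is essentially no obstacle here: the corollary is a one-line consequence of Theorems~\ref{mono} and~\ref{epi}. The only point worth stating carefully is that the notions of injectivity and surjectivity being used are those of the underlying parameter function $\alpha:A\to B$, and that ``bijective soft morphism'' in the statement should be read as ``soft morphism whose underlying function is a bijection'' — which is exactly what the conjunction of the two theorems yields. If desired, one could add the remark that, unlike in many familiar concrete categories, here bimorphisms need not be isomorphisms, since the inverse function $\alpha^{-1}:B\to A$ of a bijective soft morphism need not satisfy the soft-morphism condition $\mathcal{G}(b)\subseteq(\mathcal{F}\circ\alpha^{-1})(b)$; but this is a comment rather than part of the proof.
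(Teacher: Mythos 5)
Your proof is correct and follows exactly the route the paper intends: the corollary is obtained by combining the definition of bimorphism with Theorems~\ref{epi} and~\ref{mono}. Your added remark that bimorphisms need not be isomorphisms here is a sensible observation but, as you note, not part of the required argument.
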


\begin{proof}
	Use the definition of bimorphism and Theorem \ref{epi} and  \ref{mono}.
\end{proof}
\begin{theorem}
	A soft morphism $\alpha : \mathcal{F}_A \rightarrow \mathcal{G}_B$ is an isomorphism if and only if $\alpha$ is bijective and $\mathcal{F}(a)= (\mathcal{G}\circ \alpha)(a)$, for all $a\in A$.
\end{theorem}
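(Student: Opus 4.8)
The plan is to prove the two implications separately, using the characterization of bimorphisms from the corollary to get surjectivity and injectivity of $\alpha$ for free, so that the real content is the condition $\mathcal{F}(a)=(\mathcal{G}\circ\alpha)(a)$.

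First I would handle the easy direction. Suppose $\alpha$ is bijective and $\mathcal{F}(a)=(\mathcal{G}\circ\alpha)(a)$ for all $a\in A$. Let $\beta=\alpha^{-1}:B\to A$ be the set-theoretic inverse. I must check that $\beta$ is a soft morphism $\mathcal{G}_B\to\mathcal{F}_A$, i.e. that $\mathcal{G}(b)\subseteq(\mathcal{F}\circ\beta)(b)$ for each $b\in B$. Writing $b=\alpha(a)$ with $a=\beta(b)$, the right-hand side is $\mathcal{F}(\beta(b))=\mathcal{F}(a)$, and by hypothesis $\mathcal{F}(a)=\mathcal{G}(\alpha(a))=\mathcal{G}(b)$, so in fact $\mathcal{G}(b)=(\mathcal{F}\circ\beta)(b)$ and the inclusion holds. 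Since $\alpha\circ\beta=\mathrm{id}_B$ and $\beta\circ\alpha=\mathrm{id}_A$ as functions, and identities of \textbf{Sset(U)} are identity functions, $\beta$ is a two-sided inverse of $\alpha$ in the category, so $\alpha$ is an isomorphism.

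For the converse, suppose $\alpha$ is an isomorphism with inverse soft morphism $\beta:\mathcal{G}_B\to\mathcal{F}_A$. Then $\alpha$ is in particular a bimorphism, hence by the corollary it is bijective, and $\beta=\alpha^{-1}$ as functions. It remains to upgrade the soft-morphism inequalities to equalities. From $\alpha$ being a soft morphism, $\mathcal{F}(a)\subseteq(\mathcal{G}\circ\alpha)(a)$ for all $a\in A$. From $\beta$ being a soft morphism, $\mathcal{G}(b)\subseteq(\mathcal{F}\circ\beta)(b)$ for all $b\in B$; specializing $b=\alpha(a)$ gives $\mathcal{G}(\alpha(a))\subseteq\mathcal{F}(\beta(\alpha(a)))=\mathcal{F}(a)$. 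Combining the two inclusions yields $\mathcal{F}(a)=(\mathcal{G}\circ\alpha)(a)$ for every $a\in A$, as desired.

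The only subtlety — and the step I would be most careful about — is the bookkeeping in the converse: making sure the ``inverse'' we are handed is genuinely the function-level inverse $\alpha^{-1}$ (which follows because composition in \textbf{Sset(U)} is composition of functions and the categorical identities are identity functions), so that the substitution $\beta\circ\alpha=\mathrm{id}_A$ is legitimate. Everything else is a short two-line inclusion argument in each direction; there is no real obstacle beyond this, since Theorems~\ref{epi} and~\ref{mono} and the corollary already supply the bijectivity half of the statement.
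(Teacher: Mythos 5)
Your proposal is correct and is essentially the paper's own argument: the same chain of inclusions $\mathcal{F}(a)\subseteq(\mathcal{G}\circ\alpha)(a)\subseteq(\mathcal{F}\circ\beta)(\alpha(a))=\mathcal{F}(a)$ in the forward direction, and the set-theoretic inverse promoted to a soft morphism via the hypothesis in the converse. The only (harmless) deviations are that you obtain bijectivity through the bimorphism corollary rather than reading it off directly from $\beta\circ\alpha=\mathrm{id}_A$ and $\alpha\circ\beta=\mathrm{id}_B$ as the paper does, and that you explicitly verify $\mathcal{G}(b)\subseteq(\mathcal{F}\circ\alpha^{-1})(b)$, a check the paper leaves implicit.
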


\begin{proof}
	Suppose $\alpha$ is an isomorphism, then there exists a soft morphism $\beta : \mathcal{G}_B \rightarrow \mathcal{F}_A$ such that $\beta \circ \alpha = id_{\mathcal{F}_A}$ and $\alpha \circ \beta = id_{\mathcal{G}_B}$, i.e. $\beta \circ \alpha = id_A$ and $\alpha \circ \beta = id_B$ are identity functions. So $\alpha$ is bijective. Let $a\in A$ and $\alpha(a)= b$, then $\beta(b)=a$. Since $\alpha$ and $\beta$ are soft morphisms, we have 
	\begin{center}
		$\mathcal{F}(a) \subseteq (\mathcal{G}\circ \alpha)(a)= \mathcal{G}(b) \subseteq (\mathcal{F}\circ \beta)(b)= \mathcal{F}(a)$
	\end{center}	
	Hence $\mathcal{F}(a) = (\mathcal{G}\circ \alpha)(a)$, for all $a \in A$.\\
	
	Conversely assume that $\alpha$ is bijective and $\mathcal{F}(a)= (\mathcal{G}\circ \alpha)(a)$, for all $a\in A$. Then there exist a function $\beta: B \rightarrow A$ such that $\beta \circ \alpha = id_A$ and $\alpha \circ \beta = id_B$. So $\beta \circ \alpha = id_{\mathcal{F}_A}$ and $\alpha \circ \beta = id_{\mathcal{G}_B}$ are identity soft morphisms. Hence $\alpha$ is an isomorphism.
\end{proof}

\section{Conclusions}
In this paper,  we have investigated objects and morphisms in the category of soft sets, \textbf{Sset(U)}. All these concepts are basic supporting structures for research and development on the categorical version of the soft set theory. For future work, one can study the sections, retractions, regular and extremal monomorphism, regular and extremal epimorphism and quotient objects in the category of soft sets.

\section{Acknowledgment}
The first author is very much indebted to University Grants Commission, India for awarding Teacher Fellowship under Faculty Development Programme ($XII^{th}$ plan).

\end{document}